\newcommand{\sect}[1]{\section{#1}\setcounter{equation}{0}}
\font\mbn=msbm10 scaled \magstep1
\font\mbs=msbm7 scaled \magstep1
\font\mbss=msbm5 scaled \magstep1
\newcommand{\Di}      {\mathbb{D}}
\newcommand{\N}       { \mathbb{N}}
\newcommand{\Z}        {\mathbb{Z}  } 
\newcommand\Co           {{\mathbb C}}
\newtheorem{Th}{Theorem}[section]
\newtheorem{Lm}[Th]{Lemma}
\newtheorem{Prop}[Th]{Proposition}
\newtheorem{R}[Th]{Remark}
\newtheorem{E}[Th]{Example}
\begin{document}
\title[On Exponential Factorizations of Matrices over Banach Algebras]{On Exponential Factorizations of Matrices over Banach Algebras}

\author{Alexander Brudnyi} 
\address{Department of Mathematics and Statistics\newline
\hspace*{1em} University of Calgary\newline
\hspace*{1em} Calgary, Alberta\newline
\hspace*{1em} T2N 1N4}
\email{albru@math.ucalgary.ca}
\keywords{Banach algebra, exponent, elementary matrix, spectrum, Bass stable rank}
\subjclass[2010]{Primary 47A68, Secondary 15A54 }

\thanks{Research supported in part by NSERC}

\begin{abstract}
We study exponential factorization of invertible matrices over unital  complex Banach algebras. In particular, we prove that every invertible matrix with entries in the algebra of holomorphic functions on a closed bordered Riemann surface can be written as a product of two exponents of matrices over this algebra. Our result extends similar results proved earlier in \cite{KS} and \cite{L} for $2\times 2$ matrices.
\end{abstract}


\maketitle

\sect{Formulation of the Main Results}
{\bf 1.1.} The paper deals with the problem of optimal exponential factorization of invertible matrices over unital  complex Banach algebras. Earlier similar problems were studied in \cite{MR}, \cite{DK}, \cite{KS}, \cite{L}.
 To formulate the results, first we introduce the required definitions.

 For an associative algebra $\mathscr A$ with $1$ over a field of characteristic zero by  $M_n(\mathscr A)$ we denote the $n\times n$ matrix ring over $\mathscr A$, by $GL_n(\mathscr A)\subset M_{n}(\mathscr A)$ the group of invertible matrices, and if, in addition, $\mathscr A$ is commutative, by $SL_n(\mathscr A)\subset GL_n(\mathscr A)$ we denote the subgroup of matrices with determinant $1$. By $E_n(\mathscr A)$ we denote the subgroup of $GL_n(\mathscr A)$ generated by all {\em elementary matrices}, i.e., matrices in $GL_n(\mathscr A)$ which differ from the identity matrix $I_n$ by at most one non-diagonal entry. Let $t_n(\mathscr A)$ be the minimal $t$ such that every matrix in $E_n(\mathscr A)$ is a product of $t$ {\em unitriangular matrices} in $GL_n(\mathscr A)$ (i.e., either upper triangular matrices with $1$ along the main diagonal or lower triangular matrices with $1$ along the diagonal). We write $t_n(\mathscr A)=\infty$ if such $t$ does not exist. 

We associate with every $A\in M_n(\mathscr A)$ the  exponential series
\begin{equation}\label{e1}
\exp\, A:=\sum_{i=0}^\infty \frac{1}{i!}A^i,\qquad A^0:=I_n.
\end{equation}
If  $A$ is nilpotent (i.e., $A^k=0$ for some $k\in\N$), then the series contains finitely many terms and $\exp\, A\in GL_n(\mathscr A)$. Similarly \eqref{e1} determines an element  $\exp\, A\in GL_n(\mathscr A)$ for every $A\in M_n(\mathscr A)$ if $\mathscr A$ is a unital $m$-convex Fr\'echet algebra (see, e.g., \cite{M} for the definition) and, in particular, if $\mathscr A$ is a unital Banach algebra. Here we assume that the algebra $M_n(\mathscr A)$ is equipped with the corresponding operator norm.

Let ${\rm Exp}_n(\mathscr A)\subset GL_n(\mathscr A)$ denote the subgroup generated by exponents of matrices from $M_n(\mathscr A)$. Since every unipotent $A\in GL_n(\mathscr A)$ (i.e., such that $A-I_n\in M_n(\mathscr A)$ is nilpotent) is the exponent of 
\begin{equation}\label{e2}
\log\, A:=\sum_{i=1}^\infty \frac{(-1)^i}{i}(A-I_n)^i,
\end{equation} 
every matrix in $E_n(\mathscr A)$ can be written as a product of finitely many exponents, i.e., $E_n(\mathscr A)\subset {\rm Exp}_n(\mathscr A)$. We are interested in exponential factorizations of matrices in ${\rm Exp}_n(\mathscr A)$ with the least number of factors. To this end,
let $e_n(\mathscr A)$ be the minimal number $t$ such that every matrix in ${\rm Exp}_n(\mathscr A)$ is a product of $t$ exponents and write $e_n(\mathscr A)=\infty$ if such $t$ does not exist. 

From now on we assume that $\mathscr A$ is a commutative unital complex Banach algebra, unless stated otherwise. In this case, $GL_n(\mathscr A)$ is a complex Banach Lie group and  ${\rm Exp}_n(\mathscr A)$ coincides with its connected component  containing $I_n$.

Let $\mathfrak M(\mathscr A)$ be the maximal ideal space of $\mathscr A$, i.e., the set of all nonzero homomorphisms $\mathscr A\rightarrow\mathbb C$.
Since norm of each $\varphi\in \mathfrak M(\mathscr A)$ is at most one, 
$\mathfrak M(\mathscr A)$ is a subset of the closed unit ball of the dual space $\mathscr A^{*}$. It is a compact Hausdorff space in the weak$^*$ topology induced by
$\mathscr A^{*}$ (the {\em Gelfand topology}). Let $C(\mathfrak M(\mathscr A))$ be
the Banach algebra of continuous complex-valued functions on $\mathfrak M(\mathscr A)$ equipped with supremum norm. An element $a\in \mathscr A$ can be thought of as a
function in $C(\mathfrak M(\mathscr A))$ via the {\em Gelfand transform}
$\, \hat{} : \mathscr A\rightarrow
C(\mathfrak M(\mathscr A))$, $\hat a(\varphi):=\varphi(a)$. The map
$\,\hat{}\ $ is a nonincreasing-norm morphism of Banach algebras. It is injective if $\mathscr A$ is {\em semisimple} (i.e., the {\em Jacobson radical} of $\mathscr A$ is trivial).

Recall that for a normal topological space $X$, $\text{dim}\, X\le d$ if every  open cover of $X$ can be refined by an open cover whose order $\le d + 1$. If $\text{dim}\, X\le d$ and
the statement $\text{dim}\, X\le d-1$ is false, we say that $\text{dim}\, X = d$.

It was proved in \cite[Th.\,4]{V2} that for $\mathscr A=C(X)$, the algebra of complex-valued continuous functions on a $d$-dimensional normal topological space $X$,  group $E_n(\mathscr A)$ coincides with the set of {\em null-homotopic} maps, i.e., maps in $SL_n(\mathscr A):=C(X, SL_n(\mathbb C))$ homotopic (in this class of maps) to a constant map, and, moreover, there exists a constant $v(d)\in\mathbb N$ depending on $d$ only such that 
\begin{equation}\label{eq3}
\sup_n t_n(\mathscr A)\le v(d).
\end{equation}
It is known that $v(0)=v(1)=4$ and $v(2)=5$ (cf. \cite[Sect.\,5.2]{IK}). The problem about numerical upper bounds for $v(d)$ with $d\ge 3$ is still opened.

Next recall that an associative unital ring $\mathcal R$ has the {\em Bass stable rank} $1$ if for every pair $(a,b)$ of elements in $\mathcal R$ such that $\mathcal Ra+\mathcal Rb=\mathscr A$ there exists $t\in \mathcal R$ such that $\mathcal R(a+tb)=\mathcal R$ (i.e., $a+tb$ is left invertible). For some examples and properties of rings of the Bass stable rank $1$ see \cite{V3}. 
We are ready to formulate our first result.
\begin{Th}\label{te1}
Let $\mathscr A$ be a unital commutative complex Banach algebra and $d:={\rm dim}\, \mathfrak M(\mathscr A)$. 
\begin{itemize}
\item[(1)]
If $d<\infty$, then 
\[
\sup_n\, e_n(\mathscr A)\le \mbox{$\left\lfloor\frac{v(d)+1}{2}\right\rfloor$}+3\quad {\rm and}\quad \sup_{n\ge n(d)}e_n(\mathscr A)\le 4,
\]
where $n(d):=\left(\left\lfloor\frac d2\right\rfloor +1\right)\cdot \left(\left\lfloor\frac{v(d)}{2}\right\rfloor+3\right)$. \smallskip
\item[(2)]  
\[
\begin{array}{c}
e_n(\mathscr A)=1\ {\rm for\ all\ } n\ {\rm if}\  d=0,\ \, e_n(\mathscr A)\le 3\  {\rm for\ all\ } n\ {\rm if}\ d=1,\  {\rm and}\\
\\
\displaystyle
 \left\{\begin{array}{l}\displaystyle\sup_{n\le 9}\, e_n(\mathscr A)\le 5\smallskip\\
\displaystyle \sup_{n\ge 10}e_n(\mathscr A)\le 4\end{array}\right.
\ \, {\rm if}\ \, d=2.
\end{array}
\]
\item[(3)]
If the  Bass stable rank of $\mathscr A$ is $1$, then $e_n(\mathscr A)\le 3$  for all $n$.\smallskip
\item[(4)]  If $d\ne 0$, then
\[
e_n(\mathscr A)\ge 2\   {\rm for\ all\ } n\ge 2.
\]
\end{itemize}
\end{Th}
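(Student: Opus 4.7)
The plan is to combine three ingredients: a determinant-splitting reduction for $\text{Exp}_n(\mathscr{A})$, the bound $t_n(\mathscr{A})\le v(d)$ (transferred from $C(\mathfrak{M}(\mathscr{A}))$ to $\mathscr{A}$), and the elementary fact that every unitriangular matrix is a single exponential of a nilpotent matrix. Parts (2) and (3) will then follow from (1) by specialization, while part (4) is independent and requires constructing an explicit obstruction.

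For part (1), I would start with $A\in\text{Exp}_n(\mathscr{A})$. Since $\text{Exp}_n(\mathscr{A})$ is the connected component of $I_n$, $\det A$ lies in $\exp(\mathscr{A})$; writing $\det A=\exp(a)$, I factor $A=\exp(\text{diag}(a,0,\ldots,0))\cdot A_1$ with $A_1\in SL_n(\mathscr{A})$ still in the connected component, hence in $E_n(\mathscr{A})$. Then Vaserstein's theorem (lifted from $C(\mathfrak{M}(\mathscr{A}))$ to $\mathscr{A}$) writes $A_1$ as a product of at most $v(d)$ unitriangular matrices, each of which is the exponential of a nilpotent matrix, giving the crude bound $v(d)+1$. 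The sharper estimate $\lfloor(v(d)+1)/2\rfloor+3$ should be obtained by pairing consecutive unitriangular factors and merging each pair into a single exponential via Baker--Campbell--Hausdorff, after a conjugation that places the two factors inside a common nilpotent Lie subalgebra so that BCH terminates. For $n\ge n(d)$, the improved bound $\le 4$ should come from a stabilization argument: block-decompose $A$ and use Whitehead-type identities such as $\text{diag}(B,B^{-1})\in E_{2m}(\mathscr{A})$ to absorb the pair of extra exponentials via commutator tricks available once $n$ is at least $(\lfloor d/2\rfloor+1)(\lfloor v(d)/2\rfloor+3)$.

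For part (3), stable rank $1$ combined with the Bass--Vaserstein theorem shows that $E_n(\mathscr{A})$ is generated by very short products of elementary matrices, refining the above to $e_n(\mathscr{A})\le 3$. For part (4), I would exhibit an explicit counterexample: if $d\ge 1$, the Gelfand algebra $C(\mathfrak{M}(\mathscr{A}))$ has nontrivial topology detectable by $\mathscr{A}$, and one can choose $A\in\text{Exp}_2(\mathscr{A})$ whose pointwise eigenvalues $\lambda_i(\varphi)$ fail to admit simultaneous continuous logarithms on $\mathfrak{M}(\mathscr{A})$; this prevents $A=\exp B$ for any $B\in M_2(\mathscr{A})$, and padding by $I_{n-2}$ yields the bound for all $n\ge 2$.

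The principal obstacle is the merging step in (1). A product $\exp(N_1)\exp(N_2)$ of two nilpotent exponentials is generally \emph{not} of the form $\exp(N_3)$ with $N_3$ nilpotent, because BCH produces infinitely many terms unless $N_1,N_2$ lie in a common nilpotent Lie subalgebra; thus merging an upper and a lower unitriangular factor into one exponential demands a carefully chosen conjugation, and the bookkeeping needed to show that exactly $\lfloor(v(d)+1)/2\rfloor+3$ exponentials suffice (rather than something larger) is the crux of the argument. A secondary obstacle is the transfer of Vaserstein's theorem from $C(\mathfrak{M}(\mathscr{A}))$ back to $\mathscr{A}$, since the Gelfand transform is not surjective and any lifting requires an approximation argument intrinsic to Banach algebras.
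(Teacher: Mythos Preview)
Your outline for part~(1) has the right shape (reduce to $SL_n$, factor into unitriangulars, exponentiate), but the step you flag as the principal obstacle is a genuine gap and your proposed fix via Baker--Campbell--Hausdorff does not work: an upper and a lower unitriangular matrix together generate all of $\mathfrak{sl}_n$, so no conjugation can place both inside a common nilpotent Lie subalgebra, and BCH never terminates. The paper's mechanism is entirely different and needs no BCH. If $A=A_1\cdots A_t$ with the $A_j$ alternating upper/lower unitriangular, set $C_i:=A_1A_3\cdots A_{2i-1}$ (odd indices only); then
\[
A=\Bigl(\prod_{i=1}^{\lfloor t/2\rfloor} C_iA_{2i}C_i^{-1}\Bigr)\cdot C_{\lfloor (t+1)/2\rfloor}.
\]
Each $C_iA_{2i}C_i^{-1}$ is conjugate to a unipotent, hence unipotent; and since the odd-indexed $A_j$ are all of the \emph{same} triangular type, every $C_i$ is itself unitriangular, hence unipotent. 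Thus $A$ is a product of $\lfloor t/2\rfloor+1$ unipotents, each a single exponential. The stated constants then come from known bounds on $t_n$ for the semisimple quotient $\mathscr A_s$, transferred to $\mathscr A$ via the spectrum-preserving quotient map (so $e_n^0(\mathscr A)=e_n^0(\mathscr A_s)$), not by the approximation/lifting you suggest, and the bound $\le 4$ for $n\ge n(d)$ comes from sharper $t_n$-estimates in that range rather than a Whitehead stabilization.

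Two further problems. Part~(2) for $d=0$ does \emph{not} follow from~(1): specializing (1) with $v(0)=4$ gives only $e_n\le 5$. The assertion $e_n(\mathscr A)=1$ requires a separate argument using the Shilov idempotent theorem---one covers the totally disconnected $\mathfrak M(\mathscr A)$ by clopen sets on which $\hat A$ has spectrum in a small union of disks, the idempotent-localized pieces $e_i(A-I_n)+I_n$ then have mutually commuting logarithms, and their sum exponentiates to $A$. In part~(4), your padding by $I_{n-2}$ fails. The $2\times 2$ obstruction is that $T=\bigl(\begin{smallmatrix} g&1\\0&1\end{smallmatrix}\bigr)$ has no \emph{square root} in $M_2(\mathscr A)$; to lift this to $n>2$ one must force any square root of the padded matrix to be block-diagonal, which requires the two diagonal blocks to have disjoint spectra. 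Since $1\in\sigma_2(T)$ everywhere, padding by $I_{n-2}$ gives no spectral separation and the block structure cannot be deduced; one must pad by $M\cdot I_{n-2}$ with $M>1+\max_{\mathfrak M(\mathscr A)}|\hat g|$.
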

\begin{R}
{\rm  If $d=1$, then the  Bass stable rank of $\mathscr A$ is $1$, see \cite[Thm.\,2.9]{CS}, and the stated inequality follows from part (3). In turn, part (3)  is valid for all unital associative algebras, see \cite[Lm.\,9]{DV} and Lemma \ref{lem2.1} below.
}
\end{R}
{\bf 1.2.} In this part, we further investigate the case of  unital commutative complex Banach algebras $\mathscr A$ having the  Bass stable rank $1$. As follows from parts (3) and (4) of Theorem \ref{te1}, in this case 
\begin{equation}\label{eq1.4}
2\le e_n(\mathscr A)\le 3\  {\rm  for\ all\ } n\ge 2 \mbox{ provided that  ${\rm dim}\, \mathfrak M(\mathscr A)\ne 0$}.
\end{equation}

Further, it was shown in \cite[Thm.2]{KS} that $e_2(\mathscr A)= 2$ if $\mathscr A$ is the disk algebra, and it was recently proved in \cite[Thm.\,1]{L} (answering a question posed in \cite{KS}) that the same  is valid for algebras of holomorphic functions on closed bordered Riemann surfaces. It is known that these algebras have the Bass stable rank $1$. In our next theorem, we extend this result to all $n\ge 2$.
We require the following definition. 

For $a\in\mathscr A$ we define its {\em zero locus} by the formula
\[
Z_a:=\{\varphi\in\mathfrak M(\mathscr A)\, :\, \varphi(a)=0\}\subset\mathfrak M(\mathscr A).
\]
\begin{Th}\label{te2}
Let a unital commutative complex Banach algebra $\mathscr A$ satisfy the property: 
\begin{itemize}
\item[{\rm (*)}] For every nonzero $a\in \mathscr A$ the first \v{C}ech cohomology group $H^1(Z_a,\mathbb Z)=0$.
\end{itemize}
Then
$e_n(\mathscr A)=2$ for all $n\ge 2$.
\end{Th}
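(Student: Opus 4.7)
The lower bound $e_n(\mathscr A)\ge 2$ for $n\ge 2$ is furnished by Theorem~\ref{te1}(4) whenever $\dim\mathfrak M(\mathscr A)\ge 1$; when $\dim\mathfrak M(\mathscr A)=0$, condition (*) is vacuous while Theorem~\ref{te1}(2) gives $e_n=1\le 2$. The content of the theorem is therefore the upper bound $e_n(\mathscr A)\le 2$.

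My first step is to show that (*) implies $\mathscr A$ has Bass stable rank one. Given $a,b\in\mathscr A$ with $Z_a\cap Z_b=\varnothing$, the Gelfand transform $\widehat b$ is nowhere zero on the compactum $Z_a$, so the isomorphism $[Z_a,\mathbb C^{*}]\cong H^1(Z_a,\mathbb Z)$ combined with (*) yields a continuous logarithm of $\widehat b|_{Z_a}$; a routine extension and perturbation argument then produces $t\in\mathscr A$ with $a+tb$ invertible. Consequently Theorem~\ref{te1}(3) already gives $e_n(\mathscr A)\le 3$ and, more importantly, makes the unitriangular factorization of elementary matrices available for further manipulation.

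To improve the bound from three exponentials to two, I would target a decomposition of the form $A=\exp(X)\exp(Y)$ in which $X$ is upper triangular with diagonal entries in $\exp\mathscr A$ and $Y$ is strictly lower triangular. Equivalently, one seeks a lower unitriangular matrix $L$ such that $AL$ is upper triangular with diagonal lying in the connected component $\exp\mathscr A$ of $\mathscr A^{\times}$; writing $\det A=\exp f$ (possible since $A\in\mathrm{Exp}_n(\mathscr A)$ forces $\det A$ into the identity component) fixes the product of those diagonal entries. Both factors will then be single exponentials: $\exp(Y)$ is the nilpotent logarithm of the unipotent $L^{-1}$, while the upper triangular $AL$ is the exponential of an upper triangular matrix produced by an explicit recursion on off-diagonal entries.

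The main obstacle is the global existence of such an $L$. Performing Gauss-type elimination from the right produces a sequence of Schur complements $s_k\in\mathscr A$; the procedure is unobstructed on $\mathfrak M(\mathscr A)\setminus Z_{s_k}$ but must be patched across $Z_{s_k}$. This is precisely where (*) is essential: the vanishing $H^1(Z_{s_k},\mathbb Z)=0$ trivializes the \v{C}ech cocycle obstructing both (i) the consistent choice of pivots through $Z_{s_k}$ and (ii) the requirement that the resulting diagonal entries of the upper triangular factor lie in $\exp\mathscr A$, so that this factor is indeed a single exponential. Organizing the recursion so that (*) applies to each successive Schur complement, and so that the elimination terminates with all diagonals in the identity component of $\mathscr A^{\times}$, is the technical heart of the argument; it generalizes the ad hoc $2\times 2$ analyses of \cite{KS,L} to arbitrary $n$, and the combinatorial bookkeeping of the cohomological patching at each step is what I expect to demand the most care.
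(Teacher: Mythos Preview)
Your proposal contains a genuine gap. You assert that once $AL$ is upper triangular with diagonal entries in $\exp\mathscr A$, it is automatically a single exponential ``by an explicit recursion on off-diagonal entries''. This is false. The very counterexample used in the proof of Theorem~\ref{te1}(4) is the upper triangular matrix
\[
T=\begin{pmatrix} g & 1\\ 0 & 1\end{pmatrix},\qquad g=\exp f,
\]
which has both diagonal entries in $\exp\mathscr A$ and yet admits no square root in $M_2(\mathscr A)$, hence no logarithm. The recursion you have in mind for the off-diagonal entries requires division by quantities of the form $(e^{f_i}-e^{f_j})/(f_i-f_j)$, which can vanish identically on connected pieces of $\mathfrak M(\mathscr A)$; condition~(*) says nothing about these zero loci, so the obstruction is real. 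Thus your scheme $A=(\text{upper triangular})\cdot(\text{lower unipotent})$ yields at best $2+1=3$ exponentials, which is exactly the bound you already have from stable rank one.

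The paper's argument is structured quite differently. First, it proves (Theorem~\ref{te2.5}) that any invertible triangular matrix with $\prod a_{ii}=1$ is a product of \emph{two} exponentials: one conjugates by ${\rm diag}(1,t,\dots,t^{n-1})$ to push the off-diagonal part close to zero, then factors the diagonal via a cyclic permutation $R_n$, obtaining two factors whose spectra lie near $S_n$ and hence admit logarithms by functional calculus. Second, for a general $A\in\mathrm{Exp}_n(\mathscr A)$ the proof proceeds by induction on $n$: condition~(*) is used only once, in Lemma~\ref{lem2.6}, to arrange (after a unipotent conjugation) that the $(1,1)$ entry alone lies in $\exp\mathscr A$. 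One then clears the first row and column to reach a block form with an $(n-1)\times(n-1)$ corner $G\in\mathrm{Exp}_{n-1}(\mathscr A)$; the induction hypothesis gives $G=G_1G_2$ with each $G_i$ an exponential, and a spectral-separation trick (scaling by $\lambda\gg 0$ so that $\sigma_{n-1}(\lambda G_1)\cap\sigma_1(\tilde a_{11})=\varnothing$ and $1\notin\sigma_{n-1}(\lambda^{-1}G_2)$) makes each of the two resulting block-triangular factors conjugate to a block-diagonal matrix with a logarithm. The point is that neither factor is globally triangular, and the logarithms come from spectral, not triangular, structure.
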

Note that every algebra $\mathscr A$ satisfying property (*) has the Bass stable rank $1$, see, e.g., 
\cite[Prop.2.1]{CS}. Theorem \ref{te2} raises a question about optimality of bound $3$ in \eqref{eq1.4}.
\begin{E}\label{ex1.3}
{\rm (1) Let $R$ be a compact bordered Riemann surface with the boundary $\partial R$ and $A(R)$ be the Banach algebra of continuous functions of $R$ holomorphic on $R\setminus\partial R$ equipped with supremum norm.
It is well known that $\mathfrak M(A(R))=R$. Further, for every nonidentical zero function $f\in A(R)$, the zero locus  $Z_f$ is either void or consists of a (possibly empty) discrete subset of $R\setminus\partial R$ and a totally disconnected subset of $\partial R$. Hence, $Z_f$ is a totally disconnected subset of $R$ and so  $H^1(Z_f,\mathbb Z)=0$. Thus Theorem \ref{te2} implies that $e_n(A(R))=2$, $n\ge 2$ extending the corresponding result of  \cite{L}.\smallskip

\noindent (2) Theorem \ref{te2} is valid for algebras $\mathscr A$ of the Bass stable rank $1$ such that $H^{1}(\mathfrak M(\mathscr A),\mathbb Z)=0$. This class  includes, e.g., the algebra $C(X)$, where $X$ is a projective limit of a family of compact spaces homeomorpic to finite trees or the Wiener algebra $W(\mathbb R)_{\Gamma}$ of continuous almost periodic functions  on $\mathbb R$ having the Bohr--Fourier spectra in a semigroup $\Gamma\subset\mathbb Q_+$ with sequences of the Bohr--Fourier coefficients in $\ell^1(\Gamma)$, see, e.g., \cite{BRS} for definitions. (It easily follows from \cite[Thm.\,1]{T} that the Bass stable rank of $W(\mathbb R)_{\Gamma}$ is $1$ and from \cite[Thm.\,1.2]{BRS} that $H^{1}(\mathfrak M(W(\mathbb R)_{\Gamma}),\mathbb Z)=0$.)\smallskip

\noindent (3) Theorem \ref{te2} is valid for algebras $\mathscr A$ such $\mathfrak M(\mathscr A)$ is a projective limit of a family of compact spaces homeomorpic to the unit circle as for each nonzero $a\in\mathscr A$ the zero locus $Z_a$ is the projective limit  (under the projective limit construction) of a family of compact subsets homeomorphic to proper subsets of the unit circle and hence having trivial first  \v{C}ech cohomology groups. This implies that $H^1(Z_a,\mathbb Z)=0$ as well.
For instance, the Wiener algebra $W(\mathbb R)_{\Gamma}$ of continuous almost periodic functions  on $\mathbb R$ having the Bohr--Fourier spectra in a subgroup $\Gamma\subset\mathbb Q$ has this property.
}
\end{E}
{\bf 1.3.} The proof of Theorem \ref{te2} is based on a general result of independent interest presented in this part. 

By $S_n\subset\mathbb C$ we denote the set of roots of order $n$ of $1$. A closed $\epsilon$-neighbourhood of $S_n$ is defined as
\[
N_\varepsilon=\bigl\{z\in\mathbb C\, :\, \min_{y\in S_n}\, |z-y|\le\varepsilon\bigr\}.
\]
Let $\mathscr A$ be a (not necessarily commutative) unital complex Banach algebra. We regard $M_n(\mathscr A)$ as the unital complex Banach algebra of $\mathscr A$-linear operators on $\mathscr A^n$ equipped with the operator norm; here for $a=(a_1,\dots , a_n)\in \mathscr A^n$ its norm $\|a\|_{\mathscr A^n}:=\max_{i}\|a_i\|_{\mathscr A}$. By $\sigma_n(A)\Subset\mathbb C$ we denote the spectrum of $A\in M_n(\mathscr A)$.
\begin{Th}\label{te2.5}
Suppose $A=(a_{ij})\in GL_n(\mathscr A)$, $n\ge 2$, is a (upper or lower) triangular matrix  with the product of diagonal elements $a_{11}\cdots a_{nn}=1$. Then for every $\varepsilon>0$ there are matrices $B_1, B_2\in M_n(\mathscr A)$ such that 
\[
A= \exp\, B_1\cdot \exp\, B_2\quad{\rm and}\quad \sigma_n(\exp\, B_1)=S_n,\quad \sigma_n(\exp B_2)\subset N_\varepsilon.
\]
\end{Th}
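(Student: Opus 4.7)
The plan is to take $\exp B_1$ to be a generalized cyclic-shift matrix in $M_n(\mathbb C)\subset M_n(\mathscr A)$ depending on a small parameter $\lambda$, whose spectrum is exactly $S_n$. I will then show that $(\exp B_1)^{-1}A$ is a small perturbation of the standard cyclic-shift matrix after a suitable diagonal similarity, so that upper semicontinuity of the spectrum forces $\sigma((\exp B_1)^{-1}A)\subset N_\varepsilon$ for $\lambda$ small; a holomorphic branch of $\log$ will then produce $B_2$.

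Assume $A=(a_{ij})$ is upper triangular; the lower case is analogous. The invariance of each subspace $\mathrm{span}(e_1,\dots,e_k)$ under $A$ together with the invertibility of $A$ shows by induction that every diagonal entry $a_{kk}$ is two-sided invertible in $\mathscr A$. For $\lambda\in\mathbb C\setminus\{0\}$ set
\[
C'_\lambda \;:=\;\lambda\sum_{i=1}^{n-1} E_{i+1,i}\;+\;\lambda^{-(n-1)} E_{1,n}\;\in\; M_n(\mathbb C),
\]
where the $E_{ij}$ are the matrix units. A cofactor expansion of $\det(zI-C'_\lambda)$ along the first row gives the characteristic polynomial $z^n-1$, so $C'_\lambda$ is diagonalizable over $\mathbb C$ with simple spectrum $S_n$. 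Writing $C'_\lambda=QD_\zeta Q^{-1}$ with $D_\zeta=\mathrm{diag}(1,\zeta,\dots,\zeta^{n-1})$, $\zeta=e^{2\pi i/n}$, and $Q\in GL_n(\mathbb C)$, set $B_1:=Q\log(D_\zeta)Q^{-1}\in M_n(\mathbb C)$, so that $\exp B_1=C'_\lambda$ and $\sigma(\exp B_1)=S_n$.

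Set $X:=(C'_\lambda)^{-1}A$ and introduce $P:=\mathrm{diag}(p_1,\dots,p_n)\in GL_n(\mathscr A)$ by $p_n:=1$ and $p_{k-1}:=\lambda^{-1}a_{kk}p_k$ for $k=n,\dots,2$, equivalently $p_m=\lambda^{-(n-m)}\,a_{m+1,m+1}\,a_{m+2,m+2}\cdots a_{nn}$. A direct column-by-column computation, using that $\lambda\in\mathbb C$ commutes with every element of $\mathscr A$, shows
\[
P^{-1} X P \;=\; C + \Delta(\lambda),
\]
where $C:=\sum_{r=1}^{n-1} E_{r,r+1}+E_{n,1}$ is the standard cyclic-shift matrix and each entry of $\Delta(\lambda)$ is of the form $\lambda^m w$ with $m\ge 1$ and $w\in\mathscr A$ bounded independently of $\lambda$. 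The superdiagonal entries of $P^{-1}XP$ equal $1$ automatically via the recursion for $P$, while the corner entry at position $(n,1)$ reduces precisely to $a_{11}a_{22}\cdots a_{nn}=1$, which is where the hypothesis enters. In particular $\|\Delta(\lambda)\|\to 0$ as $\lambda\to 0$.

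By the upper semicontinuity of the spectrum in $M_n(\mathscr A)$, for $\lambda$ sufficiently small one has $\sigma(X) = \sigma(C+\Delta(\lambda)) \subset N_\varepsilon$. Shrinking $\varepsilon$ if necessary so that the disks around the $n$-th roots of unity are disjoint and avoid $0$, the branches of $\log$ on these disks sending $\zeta^k\mapsto 2\pi ik/n$ define a holomorphic function on an open neighborhood of $\sigma(X)$; the Riesz holomorphic functional calculus then produces $B_2:=\log X\in M_n(\mathscr A)$ with $\exp B_2=X$, so that $A=C'_\lambda\cdot X=\exp B_1\cdot\exp B_2$ has the required spectral properties. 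The main technical hurdle is the noncommutative bookkeeping in the diagonal similarity: the left-to-right order of the $a_{kk}$ in $p_m$ is chosen precisely so that the superdiagonal entries of $P^{-1}XP$ telescope to $1$, while the scaling $c_1=\cdots=c_{n-1}=\lambda$, $c_n=\lambda^{-(n-1)}$ together with the hypothesis $a_{11}\cdots a_{nn}=1$ is exactly what makes the closing entry at $(n,1)$ equal $1$ as well.
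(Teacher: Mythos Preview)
Your argument is correct and follows essentially the same strategy as the paper's proof: both factor $A$ as a conjugate of the cyclic permutation (spectrum $S_n$) times a matrix which, after a diagonal similarity with a small scaling parameter, is a perturbation of the cyclic permutation, and then invoke upper semicontinuity of the spectrum and the holomorphic functional calculus. The only organizational difference is that the paper first treats the diagonal case via the identity $D(A)=C_{D(A)}^{-1}R_n^{-1}C_{D(A)}R_n$ and then conjugates $A$ by $D_n(t)=\mathrm{diag}(1,t,\dots,t^{n-1})$ to reduce to it, so that its $\exp B_1$ carries the $a_{ii}$ via $C_{D(A)}$; you instead absorb the $a_{ii}$ into the conjugating matrix $P$ on the second factor, which lets you take $\exp B_1=C'_\lambda\in M_n(\mathbb C)$ to be scalar---a small but pleasant bonus.
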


\sect{Proof of Theorem \ref{te1}}
\begin{proof}
Let $\Sigma_n(\mathscr A)\subset GL_n(\mathscr A)$ be the set of matrices $A$ such that $0$ belongs to the unbounded connected component of $\mathbb C\setminus\sigma_n(A)$. Then $\Sigma_n(\mathscr A)$ is an open subset of $GL_n(\mathscr A)$  contained in the range of  $\exp: M_n(\mathscr A)\rightarrow GL_n(\mathscr A)$, see, e.g., \cite[Thms.\,10.20,\,10.30]{R}. By
$e_n^0(A)$, $A\in {\rm Exp}_n(\mathscr A)$, we denote the minimal number $t$ such that $A$ is a product of $t$ matrices from $\Sigma_n(\mathscr A)$ and a diagonal matrix from ${\rm Exp}_1(\mathscr A)\cdot I_n$. We set 
\begin{equation}\label{e2.1}
e_n^0(\mathscr A)=\sup_{A\in {\rm Exp}_n(\mathscr A)}e_n^0(A).
\end{equation}
Clearly, the function $e_n^0: {\rm Exp}_n(\mathscr A)\rightarrow\N$ is upper semicontinuous and 
\begin{equation}
e_n(\mathscr A)\leq e_n^0(\mathscr A).
\end{equation}
Thus it suffices to prove Theorem \ref{te1} for $e_n^0(\mathscr A)$.

Recall that $t_n(\mathscr A)$ denotes the minimal $t\in\N\cup\{\infty\}$ such that every matrix in $E_n(\mathscr A)$ is a product of $t$ unitriangular matrices in $SL_n(\mathscr A)$. In our case, $E_n(\mathscr A)={\rm Exp}_n(\mathscr A)\cap SL_n(\mathscr A)$ and each matrix in ${\rm Exp}_n(\mathscr A)$ can be written as a product of matrices from $E_n(\mathscr A)$ and  ${\rm Exp}_1(\mathscr A)\cdot I_n$.
\begin{Lm}\label{lem2.1}
The following inequality holds:
\begin{equation}\label{e3}
e_n^0(\mathscr A)\le \mbox{$\left\lfloor\frac{t_n(\mathscr A)}{2}\right\rfloor$} +1.
\end{equation}
\end{Lm}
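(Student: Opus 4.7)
The plan is to reduce an arbitrary $A \in \mathrm{Exp}_n(\mathscr A)$ to the form (product of $\Sigma_n$-factors)$\times$(central diagonal) by starting from the unitriangular decomposition provided by $t_n(\mathscr A)$ and then combining consecutive pairs of factors.

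First, as observed in the paragraph preceding the lemma, every $A \in \mathrm{Exp}_n(\mathscr A)$ decomposes as $A = A' \cdot D$ with $A' \in E_n(\mathscr A)$ and $D \in \mathrm{Exp}_1(\mathscr A) \cdot I_n$; the scalar $D$ is exactly the free diagonal permitted in the definition of $e_n^0$. By the definition of $t_n(\mathscr A)$, write $A' = T_1 T_2 \cdots T_t$ with $t \le t_n(\mathscr A)$ unitriangular factors. Since each $T_i$ is unipotent, $\sigma_n(T_i) = \{1\}$, so each $T_i$ already lies in $\Sigma_n(\mathscr A)$; this observation alone gives the trivial bound $e_n^0(\mathscr A) \le t_n(\mathscr A)$.

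The refinement to $\lfloor t_n(\mathscr A)/2 \rfloor + 1$ proceeds by combining consecutive pairs: for each pair $(T_{2i-1}, T_{2i})$ one seeks a scalar $c_i \in \mathrm{Exp}_1(\mathscr A)$ such that $c_i T_{2i-1} T_{2i} \in \Sigma_n(\mathscr A)$. Because scalar matrices are central, the accumulated compensating factors $c_i^{-1}$ commute through the entire product and merge into a single updated central diagonal. When $t$ is odd, the unpaired $T_t$ already belongs to $\Sigma_n(\mathscr A)$; in both parities one thereby obtains at most $\lfloor t/2 \rfloor + 1$ factors from $\Sigma_n(\mathscr A)$ together with one central diagonal, as claimed.

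The main obstacle is the combining step. For a generic commutative Banach algebra the product of an upper and a lower unitriangular matrix can have spectrum enclosing $0$ (e.g.\ a full unit circle over $\mathscr A = C(S^1)$), so the existence of a suitable $c_i \in \mathrm{Exp}_1(\mathscr A)$ is not immediate, and multiplication by a complex constant alone cannot change whether the spectrum surrounds $0$. Establishing the existence of $c_i$ requires the joint spectral structure of the pair together with the holomorphic functional calculus applied on a simply connected domain avoiding $0$ and containing $\sigma_n(c_i T_{2i-1} T_{2i})$. Verifying this in the general setting is the technical heart of the proof, and explains why the bound is $\lfloor t_n/2\rfloor + 1$ rather than the $\lceil t_n/2\rceil$ one might naively expect from a perfect pairing.
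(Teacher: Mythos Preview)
Your proposal has a genuine gap at the ``combining step,'' and you yourself identify it: you need, for each pair $(T_{2i-1},T_{2i})$, a scalar $c_i\in\mathrm{Exp}_1(\mathscr A)$ with $c_iT_{2i-1}T_{2i}\in\Sigma_n(\mathscr A)$, but you give no construction of such $c_i$ and correctly note that multiplication by a constant cannot move a spectrum off a loop around $0$. Saying this ``requires the joint spectral structure of the pair together with the holomorphic functional calculus'' is not a proof; it is a hope, and in fact no such argument is needed.

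The paper's proof avoids the difficulty entirely by exploiting a feature you did not use: the $T_j$ may be taken to \emph{alternate} between upper and lower unitriangular. With $C_i:=T_1T_3\cdots T_{2i-1}$ (the product of the odd-indexed factors up to $2i-1$), one has the telescoping identity
\[
T_1T_2\cdots T_t=\Bigl(\prod_{i=1}^{\lfloor t/2\rfloor} C_iT_{2i}C_i^{-1}\Bigr)\cdot C_{\lfloor (t+1)/2\rfloor}.
\]
Each factor $C_iT_{2i}C_i^{-1}$ is conjugate to a unitriangular matrix, hence unipotent with spectrum $\{1\}$, hence in $\Sigma_n(\mathscr A)$. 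The final factor $C_{\lfloor(t+1)/2\rfloor}$ is a product of odd-indexed $T_j$'s, which are all of the \emph{same} triangular type by the alternating assumption, so their product is again unitriangular and lies in $\Sigma_n(\mathscr A)$. This gives $\lfloor t/2\rfloor+1$ factors in $\Sigma_n(\mathscr A)$ for any $A\in E_n(\mathscr A)$; the extra central diagonal from $\mathrm{Exp}_1(\mathscr A)\cdot I_n$ handles the passage from $E_n(\mathscr A)$ to $\mathrm{Exp}_n(\mathscr A)$ and is free in the count defining $e_n^0$. No spectral adjustment, no scalar $c_i$, and no functional calculus are required.
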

\begin{proof}
Suppose $A\in E_n(\mathscr A)$, $A=A_1\cdots A_t$, $t\ge 2$, where all $A_j$ are alternating upper and lower unitriangular matrices in $E_n(\mathscr A)$. Then
\[
A=\left(\prod_{i=1}^{\left\lfloor\frac{t}{2}\right\rfloor}C_iA_{2i}C_i^{-1}\right)\cdot C_{\lfloor\frac{t+1}{2}\rfloor},\quad {\rm where}\quad C_i:=A_1\cdots A_{2i-1},\quad 1\le i\le \mbox{$\left\lfloor\frac{t}{2}\right\rfloor$}.
\]
This  and the definition of $e_n^0(\mathscr A)$ give \eqref{e3} as all terms in the product are unipotent (i.e., belong to $\Sigma_n(\mathscr A)$).
\end{proof}

Let $\mathscr J_{\mathscr A}\subset \mathscr A$ be the Jacobson radical and $q: \mathscr A\rightarrow \mathscr A/\mathscr J_\mathscr A=:\mathscr A_s$ be the quotient homomorphism. 
Then $\mathscr A_s$ is a semisimple Banach algebra with respect to the quotient norm. We extend $q$ to a homomorphism $q_n:M_n(\mathscr A)\rightarrow M_n(\mathscr A_s)$ defined by the application of $q$ to  entries of matrices in $M_n(\mathscr A)$.  Then  $\sigma_n(A)=\sigma_n(q_n(A))$ for all $A\in M_n(\mathscr A)$. This implies that $q_n$ maps $\Sigma_n(\mathscr A)$ surjectively onto $\Sigma_n(\mathscr A_s)$ and  $q_n^{-1}(\Sigma_n(\mathscr A_s))=\Sigma_n(\mathscr A)$. Hence, $e_n^0(A)=e_n^0(q_n(A))$ for all $A\in {\rm Exp}_n(\mathscr A)$ and
\begin{equation}\label{e2.4}
e_n^0(\mathscr A)=e_n^0(\mathscr A_s).
\end{equation}

Since maximal ideal spaces of $\mathscr A$ and $\mathscr A_s$ are homeomorphic, inequalities of parts (1) and (2) for $d>0$ of Theorem \ref{te1} follow directly from similar inequalities of \cite[Thm.\,1.1]{Br} for $t_n(\mathscr A_s)$ by means of Lemma \ref{lem2.1} and \cite[Lms.\,7,\,9,\,Thm.\,20]{DV}, and \cite[Thm.\,2.9]{CS} (see the proofs in \cite{Br} for details).\smallskip

Now let us prove part (2) of the theorem for $d=0$. In this case, by the definition of covering dimension, $\mathfrak M(\mathscr A)$ is a totally disconnected space. Hence, the base of topology of $\mathfrak M(\mathscr A)$ consists of clopen subsets. The required result will follow from
\begin{Lm}\label{lem2.2}
Given $A\in GL_n(\mathscr A)$ there exist idempotents $e_1,\dots, e_k\in\mathscr A$ satisfying $\sum_{i=1}^k e_i=1$ and $e_ie_j=0$ for all $i\ne j$ such that every matrix $e_i(A-I_n)+I_n\in\Sigma_n(\mathscr A)$.
\end{Lm}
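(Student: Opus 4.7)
The plan is to use the hypothesis $d=0$ to produce a finite clopen partition of $\mathfrak M(\mathscr A)$ on which the spectrum of $A$ stays in a ``safe'' compact set, then lift the partition to orthogonal idempotents of $\mathscr A$ via Shilov's idempotent theorem. The matrices $B_i:=e_i(A-I_n)+I_n$ will equal $A$ on the clopen piece corresponding to $e_i$ and equal $I_n$ elsewhere, so their spectra are controlled by the spectra of $A$ on these small pieces.

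First I would fix $\varphi_0\in\mathfrak M(\mathscr A)$. Because $A\in GL_n(\mathscr A)$, its value $A(\varphi_0):=(\varphi_0(a_{ij}))\in GL_n(\mathbb C)$ has a finite spectrum in $\mathbb C\setminus\{0\}$. I would then choose pairwise disjoint closed disks around each point of $\sigma(A(\varphi_0))\cup\{1\}$, all avoiding $0$, and let $K_{\varphi_0}$ be their union. Since $K_{\varphi_0}$ is a finite disjoint union of closed disks not containing $0$, the complement $\mathbb C\setminus K_{\varphi_0}$ is connected, so $0$ lies in its unbounded component. By upper semicontinuity of the spectrum of the continuous map $\varphi\mapsto A(\varphi)$ together with the existence of a clopen neighborhood basis at $\varphi_0$ (from $\dim\mathfrak M(\mathscr A)=0$), there is a clopen $W_{\varphi_0}\ni\varphi_0$ with $\sigma(A(\varphi))\subset K_{\varphi_0}$ for all $\varphi\in W_{\varphi_0}$.

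Next I would, by compactness, extract a finite subcover $W_{\varphi_1},\dots,W_{\varphi_k}$ and refine it to a disjoint clopen partition $U_1,\dots,U_k$ with $U_i\subset W_{\varphi_i}$ (for instance $U_i:=W_{\varphi_i}\setminus\bigcup_{j<i}W_{\varphi_j}$). Shilov's idempotent theorem supplies idempotents $e_i\in\mathscr A$ with $\hat e_i=\chi_{U_i}$. Since $\widehat{e_ie_j}=0$ and $\widehat{1-\sum_i e_i}=0$, the elements $e_ie_j$ and $1-\sum_i e_i$ are idempotents whose Gelfand transforms vanish, hence lie in the Jacobson radical of $\mathscr A$; any idempotent $e$ in the radical satisfies $e=0$ because $1-e$ is invertible while $(1-e)e=0$. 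Hence $e_ie_j=0$ for $i\ne j$ and $\sum_i e_i=1$.

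Finally, I would evaluate $B_i:=e_i(A-I_n)+I_n$ pointwise: $B_i(\varphi)=A(\varphi)$ for $\varphi\in U_i$ and $B_i(\varphi)=I_n$ otherwise. The standard identity $\sigma_n(B_i)=\bigcup_{\varphi}\sigma(B_i(\varphi))$ (valid for commutative unital Banach algebras via $\det(B_i-\lambda I_n)$) then gives $\sigma_n(B_i)\subset K_{\varphi_i}\cup\{1\}=K_{\varphi_i}$, so $0$ is in the unbounded component of $\mathbb C\setminus\sigma_n(B_i)$ and $B_i\in\Sigma_n(\mathscr A)$. The most delicate step is passing from a clopen partition of $\mathfrak M(\mathscr A)$ to bona fide orthogonal idempotents of $\mathscr A$ summing to $1$: Shilov's theorem handles existence, but genuine orthogonality and completeness require the radical-idempotent observation above, which is what makes the argument go through even when $\mathscr A$ is not semisimple.
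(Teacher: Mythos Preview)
Your proof is correct and follows essentially the same route as the paper: localize via a clopen basis (using $\dim\mathfrak M(\mathscr A)=0$), control the eigenvalues of $\hat A$ on each piece inside a finite union of small disks avoiding $0$, invoke Shilov's idempotent theorem, and read off the spectrum of $e_i(A-I_n)+I_n$ from the pointwise description. The only cosmetic differences are that the paper makes the continuity step explicit via Rouch\'e's theorem on $\det(\hat A(y)-\lambda I_n)$ rather than citing upper semicontinuity of the spectrum, and that you add the point $1$ to $K_{\varphi_0}$ up front whereas the paper appends $\{1\}$ to $N_s$ after computing $\sigma(A_{U_x})$. Your justification of $e_ie_j=0$ and $\sum_i e_i=1$ via the radical-idempotent trick is in fact more detailed than the paper, which simply asserts these relations; just note that the argument for $1-\sum_i e_i$ being idempotent tacitly uses $e_ie_j=0$ first, so establish the orthogonality before the completeness.
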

\begin{proof}
The spectrum $\sigma(A)$  of $A$ is given by the formula
\[
\sigma(A)=\{\lambda\in\mathbb C\, :\, \exists\, y\in \mathfrak M(\mathscr A)\ {\rm such\ that}\ p(\lambda, y):={\rm det}(\hat A(y)-\lambda\cdot \hat{I}_n)=0\}.
\]
Here $\hat I_n\in GL_n(\mathbb C)$ is the identity matrix and $p$ is a continuous function on $\mathbb C\times \mathfrak M(\mathscr A)$ which is a holomorphic polynomial of degree $n$ in $\lambda$. 

Given $x\in  \mathfrak M(\mathscr A)$ we write $p=q_x+r_x$, where $q_x(\lambda):={\rm det}(\hat A(x)-\lambda\cdot \hat I_n)$ and $r_x$ is a continuous function on $\mathbb C\times \mathfrak M(\mathscr A)$ equals zero on $\mathbb C\times\{x\}$. The polynomial $q_x$ has $n$ complex zeros counted with their multiplicities.  Let $s>0$ be so small that the closed $s$-neighbourhood $N_s$ of the set of zeros of $q_x$ consists of mutually disjoint closed disks of radius $s$ and does not contain $0$. Let $\partial N_s$ be the boundary of $N_s$. Since $r_x$ is continuous on the compact set $\partial N_s\times \mathfrak M(\mathscr A)$ and equals zero on $\partial N_s\times\{x\}$, there is a clopen set $U_x\subset  \mathfrak M(\mathscr A)$ containing $x$ such that
 \[
 \sup_{(\lambda,y)\in\partial N_s\times U_x} |r_x(\lambda,y)|<\inf_{\lambda\in \partial N_s}|q_x(\lambda)|.
 \]
Therefore by the Rouch\'e theorem,  for every $y\in U_x$ all zeros of $p(\cdot, y)$ are situated in $N_s$. 

Next, by the Shilov idempotent theorem \cite{S} there is an (unique) idempotent $e_{U_x}\in\mathscr A$ such that the Gelfand transform $\hat{e}_{U_x}\in C(\mathfrak M(\mathscr A))$ of $e_{U_x}$ is the characteristic function of $U_x$. We put
\begin{equation}\label{equ2.5}
A_{U_x}:=e_{U_x}(A-I_n)+I_n.
\end{equation}
By the definition, for $\lambda\in\mathbb C$, $y\in\mathfrak M(\mathscr A)$
\[
{\rm det}(\hat A_{U_x}(y)-\lambda\cdot \hat{I}_n)={\rm det}(\hat e_{U_x}(\hat A(y)-\hat I_n)+\hat I_n-\lambda\cdot \hat{I}_n)=\left\{
\begin{array}{ccc}
p(\lambda,y)&{\rm if}&y\in U_x\smallskip\\ 
(1-\lambda)^n&{\rm if}&y\not\in U_x.
\end{array}
\right.
\]
Thus the spectrum $\sigma(A_{U_x})$ of $A_{U_x}$ is $(\sigma(A)\cap U_x)\cup\{1\}$. In particular,  $\sigma(A_{U_x})\subset N_s\cup\{1\}$ and so  the unbounded connected component of $\mathbb C\setminus \sigma(A_{U_x})$ contains the open connected set $\mathbb C\setminus (N_s\cup\{1\})$ which in turn contains $0$. Hence, $A_{U_x}\in\Sigma_n(\mathscr A)$.

Next, we cover $\mathfrak M(\mathscr A)$ by sets $U_x$, $x\in \mathfrak M(\mathscr A)$,  choose a finite subcover of this cover and then a refinement of this subcover by clopen mutually disjoint sets, say, $U_1,\dots, U_k$. Let $e_i\in\mathscr A$ be the idempotent such that $\hat e_i$ is the characteristic function of $U_i$. Then $\sum_{i=1}^n e_i=1$ and $e_ie_j=0$ for all $i\ne j$. Also, by our construction all matrices $A_i:=e_{i}(A- I_n)+I_n\in\Sigma_n(\mathscr A)$.
\end{proof}
Using Lemma \ref{lem2.2} let us finish the proof of part (2) of the theorem for $d=0$. 

Let $B_i\in M_n(\mathscr A)$ be a logarithm of $A_i\in\Sigma_n(\mathscr A)$ defined by holomorphic functional calculus (see, e.g., \cite[Thm.\,10.30]{R}). Then $B_i$ belongs to the closure of the subspace of $M_n(\mathscr A)$ generated by matrices $(A_i-\lambda I_n)^{-1}$, $\lambda\not\in\sigma(A_i)$. In particular, all matrices $B_i$, $1\le i\le k$, commute (because matrices $A_i-\lambda I_n$ and $A_j-\gamma I_n$ commute for all $\lambda,\gamma\in\mathbb C$, $i\ne j$). We set
\[
B=\sum_{i=1}^k B_i\in  M_n(\mathscr A).
\]
Then since all $B_i$ commute,
\[
\exp\, B=\prod_{i=1}^k\exp\, B_i=\prod_{i=1}^k A_i=\prod_{i=1}^k (e_{i}(A- I_n)+I_n)=I_n+\sum_{i=1}^n e_{i}(A- I_n)=A.
\]
This shows that $e_n(\mathscr A)=1$ for all $n$ which proves part (2) of Theorem \ref{te1} for $d=0$.\smallskip

Further, part (3) of the theorem follows from Lemma \ref{lem2.1} and \cite[Lm.\,9]{DV} asserting that if $A$ has the Bass stable rank $1$, then $t_n(A)=4$ for all $n\ge 2$.\smallskip

Finally, let us prove part (4). We will prove that $e_n(\mathscr A)\ge 2$ provided that $d:=dim\, \mathfrak M(\mathscr A)\ne 0$. Since clearly $e_n(\mathscr A)\ge e_n(\mathscr A_s)$,
we may assume that the algebra $\mathscr A$ is semisimple and is a (not necessarily closed) subalgebra of $C(\mathfrak M(\mathscr A))$.
We adapt the idea of \cite{XT}. 

Since $d\ne 0$, the compact space $\mathfrak M(\mathscr A)$ is not totally disconnected; hence, it has a connected component $X$ containing two distinct points, say, $x_1$ and $x_2$.
Let $f\in \mathscr A$ be such that $f(x_1)=0$, $f(x_2)=2\pi i$ and $g={\rm exp}\, f$. Let
\[
T=\left(
\begin{array}{cc}
g&1\\
0&1
\end{array}
\right)\in Exp_2(\mathscr A).
\]
We claim that $T$ does not have a logarithm in $M_2(\mathscr A)$.  Indeed, if it is false, then 
there exists 
\[
S=\left(
\begin{array}{cc}
f_1&f_2\\
f_3&f_4
\end{array}
\right)\in M_2(\mathscr A)
\]
such that $S^2=T$, i.e.,
\[
\left(
\begin{array}{cc}
f_1^2+f_2f_3&f_1f_2+f_2f_4\\
f_3f_1+f_4f_3&f_3f_2+f_4^2
\end{array}
\right)=
\left(
\begin{array}{cc}
g&1\\
0&1
\end{array}
\right).
\]
This implies that 
\begin{equation}\label{e2.5}
(f_1+f_4)f_2=1\quad{\rm and}\quad (f_1+f_4)f_3=0,
\end{equation}
i.e., $f_3=0$. Hence, $f_1^2=g$ and $f_4^2=1$.
Since $X$ is connected and $f_1|_X, f_2|_X\in C(X)$, these equations imply that either $f_4|_X=1$ or $f_4|_X=-1$ and either $f_1|_X=\exp\, \bigl(\frac{f}{2}\bigr)|_X$ or $f_1|_X=-\exp\, \bigl(\frac{f}{2}\bigr)|_X$. Let us consider each of these cases.

(a) If $f_1|_X=\exp\, \bigl(\frac{f}{2}\bigr)|_X$ and $f_4|_X=-1$, or
$f_1|_X=-\exp\, \bigl(\frac{f}{2}\bigr)|_X$ and $f_4|_X=1$, then
 $f_1(x_1)+f_4(x_1)=\pm 1+\mp 1=0$ which contradicts the first equation in \eqref{e2.5}.

(b) If $f_1|_X=\exp\, \bigl(\frac{f}{2}\bigr)|_X$ and $f_4|_X=1$, or $f_1|_X=-\exp\, \bigl(\frac{f}{2}\bigr)|_X$ and $f_4|_X=-1$, then $f_1(x_2)+f_4(x_2)=\pm e^{\pi i}+\pm 1=0$ which contradicts the first equation in \eqref{e2.5} as well.

This shows that the above $S\in M_2(\mathscr A)$ does not exist and so $T$ does not have a logarithm in $M_2(\mathscr A)$, i.e., $e_2(\mathscr A)\ge 2$.

Let us prove the result for $n>2$.  Let $M:=1+\max_{\mathfrak M(\mathscr A)} |g|$.
We put
\[
T_n=\left(
\begin{array}{cc}
M\cdot I_{n-2}&0_{(n-2)\times 2}\\
0_{2\times (n-2)}&T
\end{array}
\right)\in {\rm Exp}_n(\mathscr A);
\]
here $0_{k\times l}$ is the $k\times l$ zero matrix.

Assume that $T_n$ has a logarithm in $M_n(\mathscr A)$. Then
there exists 
\[
S_n=\left(
\begin{array}{cc}
L_1&L_2\\
L_3&L_4
\end{array}
\right)\in M_n(\mathscr A)
\]
such that $S_n^2=T_n$; here $L_1\in M_{n-2}(\mathscr A)$, $L_4\in M_2(\mathscr A)$, and $L_2$ and $L_3$ are matrices with entries in $\mathscr A$ of sizes $(n-2)\times 2$ and $2\times (n-2)$, respectively.

The identity $S_nT_n=T_nS_n$ implies that
\[
L_2(T-M\cdot I_2)=0\quad {\rm and}\quad (T-M\cdot I_2)L_3=0.
\]
Since $T-M\cdot I_2$ is invertible by our choice of $M$, we get from here that $L_2=0$ and $L_3=0$. Hence, $L_4^2=T$ which contradiction the first part of the proof. Thus such $S_n\in M_n(\mathscr A)$ does not exist and so $T_n$ does not have a logarithm in $M_n(\mathscr A)$, i.e., $e_n(\mathscr A)\ge 2$ for all $n\ge 3$ as well.

The proof of the theorem is complete.
\end{proof}

\sect{Proof of Theorem \ref{te2.5}} 
\begin{proof}
First, let us proof the theorem for a diagonal matrix $A$ with diagonal entries $a_{11},\dots, a_{nn}\in GL_1(\mathscr A)$ satisfying $a_{11}\cdots a_{nn}=1$. In this case as in the proof of \cite[Lm.\,16]{DV} we have
\begin{equation}\label{equ3.1}
A=C_A^{-1} R_n^{-1} C_A R_n,
\end{equation}
where $C_A=(c_{ij})$ is a diagonal matrix with $c_{11}=1, c_{22}=a_{11}, c_{33}=a_{11}a_{22},\dots, c_{nn}=a_{11}\cdots a_{n-1\, n-1}$ and 
$R_n$ is a (cyclic) permutation operator on $\mathscr A^n$ such that $R_n(e_i)=e_{i+1}$,
$1\le i\le n-1$, and $R_n(e_n)=e_1$; here $e_i\in\mathscr A^n$ is the vector having the $i$th coordinate $1$ and all others $0$. By the definition, the spectrum $\sigma_n(R_n)$ of $R_n$ is $S_n$ (the set of roots of order $n$ of $1$). Hence, $R_n$ has a logarithm $R_n'\in M_n(\mathscr A)$ (cf. \cite[Thm.\,10.30]{R}). We set $B_1=-C_A^{-1} R_n' C_A$ and $B_2=R_n'$. Then $A=C_A^{-1} R_n^{-1} C_A R_n=\exp\, B_1\cdot \exp\, B_2$  and $\sigma_n(\exp\, B_1)=\sigma_n (\exp\, B_2)=S_n$, as required. 

Now, let us prove the statement in the general case. Without loss of generality assume that $A$ is upper triangular. We denote by $D(A)$ the diagonal matrix whose diagonal is the same as that of $A$ and let $D(A)=C_{D(A)}^{-1}R_n^{-1}C_{D(A)}R_n$ be factorization \eqref{equ3.1} for $D(A)$.
 Let $D_n(t)\in GL_n(\mathscr A)$, $t\in\mathbb C\setminus\{0\}$, be the diagonal matrix with diagonal entries $1,t,\dots, t^{n-1}$. Then the upper triangular matrix $D_n(t)^{-1}AD_n(t)$ is equal to $C_{D(A)}^{-1}R_n^{-1}C_{D(A)}R_nA(t)$, where $A(t)$ is an upper triangular unipotent matrix such that $\lim_{t\rightarrow 0}A(t)=I_n$ (convergence in $M_n(\mathscr A)$). In particular, according to  \cite[Thm.\,10.20]{R}
 there is some $t_\varepsilon>0$ such that $\sigma_n(R_nA(t_{\varepsilon}))\subset N_\varepsilon$ (the closed $\epsilon$-neighbourhood of $S_n$). Without loss of generality we may assume that $\varepsilon$ is so small that $\mathbb C\setminus N_\varepsilon$ is an open connected set containing $0$. Then $R_nA(t_{\varepsilon})$ has a logarithm $B_{t_\varepsilon}\in M_n(\mathscr A)$ (cf. \cite[Thm.\,10.30]{R}).
 We set $B_1:=-D_n(t_\varepsilon)C_{D(A)}^{-1}R_n'C_{D(A)}D_n(t_\varepsilon)^{-1}$, where $R_n'$ is a logarithm of $R_n$, and $B_2:=D_n(t_\varepsilon)B_{t_\varepsilon}D_n(t_\varepsilon)^{-1}$.
  Clearly, 
 \[
A= \exp\, B_1\cdot \exp\, B_2\quad{\rm and}\quad \sigma_n(\exp\, B_1)=S_n,\quad \sigma_n(\exp B_2)\subset N_\varepsilon.
\]
The proof of the theorem is complete.
\end{proof}

\sect{Proof of Theorem \ref{te2}}
First, we prove the following auxiliary result.

Let $\mathscr A$ be an algebra satisfying condition (*) of Theorem \ref{te2}. We denote by $M_{k,l}(\mathscr A)$ the set of matrices of size $k\times l$ with entires in $\mathscr A$.
Let 
\[
U_n(\mathscr A):=\{A\in M_{n,1}(\mathscr A)\, :\, \exists\, B\in M_{1,n}(\mathscr A)\ {\rm such\ that}\ BA=1\}
\] 
be the subset of left-invertible matrices in $M_{n,1}(\mathscr A)$.
\begin{Lm}\label{lem2.6}
Suppose $A=(a_1,\dots ,a_n)^{\mathrm T}\in U_n(\mathscr A)$, $n\ge 2$, where one of $a_2,\dots, a_n$ is nonzero. Then there exists a unipotent upper triangular matrix $C\in GL_n(\mathscr A)$ such that
\[
CA=(a_1',\dots ,a_{n-1}',a_n)^{\mathrm T},
\]
where $a_1'\in {\rm Exp}_1(\mathscr A)$.
\end{Lm}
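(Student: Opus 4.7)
The plan is to combine a reduction based on the Bass stable rank $1$ of $\mathscr A$ (which follows from~(*) by \cite[Prop.\,2.1]{CS}) with an Arens--Royden argument in a principal Banach-algebra quotient of $\mathscr A$. First, the left-invertibility hypothesis $\sum_{j=1}^n b_ja_j=1$ shows that the pair $(a_1,\sum_{j\ge 2}b_ja_j)$ is unimodular in $\mathscr A$; the Bass stable rank $1$ property then yields $t\in\mathscr A$ such that $a_1+t\sum_{j\ge 2}b_ja_j=a_1+\sum_{j\ge 2}(tb_j)a_j$ is invertible. This equality expresses the first-row effect of multiplying $A$ by the unipotent upper triangular matrix with first row $(1,tb_2,\dots,tb_n)$ and identity elsewhere, so we may replace $a_1$ by this invertible element and henceforth assume $a_1\in\mathscr A^*$.

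Next, fix $k\in\{2,\dots,n\}$ with $a_k\ne 0$ and consider the unital commutative Banach algebra $\mathscr B:=\mathscr A/\overline{a_k\mathscr A}$. Its maximal ideal space equals $Z_{a_k}$, so by condition~(*) we have $H^1(\mathfrak M(\mathscr B),\mathbb Z)=0$; the Arens--Royden theorem then gives $\mathscr B^*={\rm Exp}_1(\mathscr B)$. Since the image $\bar a_1\in\mathscr B$ is invertible, there exists $h\in\mathscr A$ whose image in $\mathscr B$ satisfies $e^{\bar h}=\bar a_1$, equivalently $e^h-a_1\in\overline{a_k\mathscr A}$.

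The only subtle point is the passage from the closure $\overline{a_k\mathscr A}$ to the ideal $a_k\mathscr A$ itself, which need not be closed. I resolve it by an openness argument: pick a sequence $c_m\in\mathscr A$ with $c_ma_k\to e^h-a_1$, so that $a_1+c_ma_k\to e^h\in{\rm Exp}_1(\mathscr A)$. Since ${\rm Exp}_1(\mathscr A)$ is the identity component of the Banach Lie group $\mathscr A^*$, it is open in $\mathscr A$; hence $a_1+c_ma_k\in{\rm Exp}_1(\mathscr A)$ for every sufficiently large $m$. Fixing such an $m$ and setting $c_k:=c_m$, $c_j:=0$ for $j\in\{2,\dots,n\}\setminus\{k\}$, the unipotent upper triangular matrix with first row $(1,c_2,\dots,c_n)$ (and identity elsewhere), composed with the one from the first step, produces the required $C$, with $a_1'=a_1+\sum_{j\ge 2}c_ja_j\in{\rm Exp}_1(\mathscr A)$.
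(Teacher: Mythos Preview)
Your argument is correct. The two-step reduction---first using Bass stable rank~$1$ to make $a_1$ invertible, then passing to the quotient $\mathscr B=\mathscr A/\overline{a_k\mathscr A}$ and invoking Arens--Royden together with the openness of ${\rm Exp}_1(\mathscr A)$ in $\mathscr A$---does exactly what is needed. The identification $\mathfrak M(\mathscr B)\cong Z_{a_k}$ is standard (characters of the quotient are the characters of $\mathscr A$ killing $a_k$), and the openness trick neatly avoids having to know that $a_k\mathscr A$ is closed. The degenerate case $Z_{a_k}=\varnothing$ (i.e., $a_k$ invertible) causes no trouble: then $\overline{a_k\mathscr A}=\mathscr A$ and one may simply solve $a_1+c_ka_k=1$ directly.

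Your route differs from the paper's in organization rather than in substance. The paper proceeds by induction on $n$: the base case $n=2$ is handled by observing that $\hat a_1|_{Z_{a_2}}$ has a continuous logarithm (condition~(*)) and then citing \cite[Cor.\,4.13]{MR1} to produce $b$ with $a_1+ba_2\in{\rm Exp}_1(\mathscr A)$; for $n\ge 3$ it uses stable rank $1$ to reduce the unimodular $n$-column to a unimodular $(n-1)$-column (by adding multiples of $a_n$) and applies the induction hypothesis. Your argument is non-inductive and self-contained: one application of stable rank $1$ followed by a single Arens--Royden/openness step, with the resulting $C$ differing from $I_n$ only in its first row. This has the pleasant side effect that $a_2,\dots,a_n$ are left literally untouched, so the preservation of $a_n$ required by the lemma is automatic. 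The paper's version, by contrast, packages the analytic input into the cited corollary and emphasizes the inductive reduction; what it buys is that the $n=2$ statement is isolated as a quotable black box.
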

\begin{proof}
We prove this result by induction on $n$. For $n=2$, since $a_2\ne 0$, and $\hat a_1|_{Z_{a_2}}\in GL_1(C(Z_{a_2}))$ has a logarithm in $C(Z_{a_2})$ due to condition (*), there is $b\in \mathscr A$ such that $a_1+ba_2=a_1'\in {\rm Exp}_1(\mathscr A)$ (see, e.g., \cite[Cor.\,4.13]{MR1}). 
Thus  the lemma holds with
\[
C=\left(
\begin{array}{cc}
1&b\\
0&1
\end{array}
\right).
\]

Assume that the lemma is valid for all natural numbers $\le n-1$, $n\ge 3$; let us prove it for $n$.
Without loss of generality we may assume that $a_n\ne 0$ as for otherwise the result follows from the induction hypothesis. Since the Bass stable rank of $\mathscr A$ is $1$  and $A\in U_n(\mathscr A)$,
there exist $b_1,\dots, b_{n-1}\in \mathscr A$ such that $A_1:=(a_1+b_1a_n,\dots, a_{n-1}+b_{n-1}a_n)^{\mathrm T}\in U_{n-1}(\mathscr A)$. Clearly, for
\[
C_1=\left(
\begin{array}{cc}
I_{n-1}&B\\
0_{n-1}&1
\end{array}
\right),\qquad B=(b_1,\dots , b_{n-1})^{\mathrm T},\quad 0_{n-1}:=0\in M_{1,n}(\mathscr A),
\]
we have 
\[
C_1A=(a_1+b_1a_n,\dots , a_{n-1}+b_{n-1}a_n, a_n)^{\mathrm T}.
\]
We may assume that one of $a_i+b_ia_n$, $2\le i\le n-1$, is nonzero (for otherwise $C_1A=(a_1+b_1a_n,0\dots , 0, a_n)^{\mathrm T}$ and the required result follows as in the case $n=2$).
Then by the induction hypothesis, there is a unipotent upper triangular matrix $C_2\in GL_{n-1}(\mathscr A)$ such that $C_2A_1=(a_1',\dots ,a_{n-2}',a_{n-1}+b_{n-1}a_n)^{\mathrm T},$ where $a_1'\in {\rm Exp}_1(\mathscr A)$. Thus  the statement of the lemma holds with
\[
C=\widetilde C_2 C_1,\quad{\rm where}\quad 
\widetilde C_2=\left(
\begin{array}{cc}
C_2&0_{n-1}^{\rm T}\\
0_{n-1}&1
\end{array}
\right).
\]
This proves the induction step and hence the lemma.
\end{proof}
\begin{proof}[Proof of Theorem \ref{te2}]
We prove that for each $A\in {\rm Exp}_n(\mathscr A)$ there exist $B_1,B_2\in M_n(\mathscr A)$ such that $A=\exp\, B_1\cdot \exp\, B_2$. Note that since the Bass stable rank of $\mathscr A$ is $1$, a matrix $A\in Exp_n(\mathscr A)$ if and only if ${\rm det}\, A\in {\rm Exp}_1(\mathscr A)$ (see \cite[Lm.\,9]{DV}). Hence,
${\rm Exp}_n(\mathscr A)$ is the direct product of subgroups ${\rm Exp}_1(\mathscr A)\cdot I_n$ and $SL_n(\mathscr A)$.

We prove the result by induction on $n$. Since the result is obvious for $n=1$, we assume that it is valid for all
natural numbers $\le n-1$, $n\ge 1$, and now prove it for $n$. 

So let $A=(a_{ij})\in {\rm Exp}_n(\mathscr A)$. We consider two cases:\smallskip

\noindent (1) $A$ is an upper triangular matrix. Then the required result follows straightforwardly from Theorem \ref{te2.5}.\smallskip

\noindent (2) There exists $a_{ij}\ne 0$ for some $j< i$. 

Let $S_{ij}$ be the constant elementary matrix such that
 $AS_{ij}$ is obtained from $A$ by interchanging the first and the $j$th  columns. Then the matrix $(a_{ij}'):=S_{ij}^{-1}A S_{ij}$ is such that $a_{i1}'=a_{ij}\ne 0$.  Clearly,  it suffices to prove the result for such a matrix. Thus without loss of generality we may assume that $A$ itself is such that $c_{i1}\ne 0$ for some $i\ge 2$. Then according to Lemma \ref{lem2.6}, there is a unipotent upper triangular matrix 
 $B\in GL_n(\mathscr A)$ such that  the top-left corner  entry of $A_1=BA$ is in ${\rm Exp}_1(\mathscr A)$. Then the top-left corner entry of the matrix $BAB^{-1}=A_1B^{-1}=:A_2$ is the same as that of $A_1$.
 Multiplying $A_2=(\tilde a_{ij})$ from the right by an upper triangular matrix $D$ whose first row is $(1,-\frac{\tilde a_{12}}{\tilde a_{11}},\dots, -\frac{\tilde a_{1n}}{\tilde a_{11}})$ and other rows are as in $I_n$, we obtain a matrix $A_3=A_2D$ whose first row is $(\tilde a_{11},0,\dots, 0)$. Hence, it suffices to prove the result for $A_2:=A_3D^{-1}$. By definition,
 \[
 A_2=\left(
 \begin{array}{cc}
 \tilde a_{11}&0_{n-1}\\
 H&G
 \end{array}
 \right)\cdot 
 \left(\begin{array}{cc}
 1&K\\
 0_{n-1}^{\rm T}&I_{n-1}
 \end{array}
 \right)
 \]
 for some $H\in M_{n-1,1}(\mathscr A)$, $G\in GL_{n-1}(\mathscr A)$,
 $K:=(\frac{\tilde a_{12}}{\tilde a_{11}},\dots, \frac{\tilde a_{1n}}{\tilde a_{11}})\in M_{1,n-1}(\mathscr A)$. Since ${\rm det}\, G=\frac{{\rm det}\, A}{\tilde a_{11}}\in {\rm Exp}_{1}(\mathscr A)$, the matrix $G\in {\rm Exp}_{n-1}(\mathscr A)$ and, hence,
by the induction hypothesis, $G$ can be written as $G=G_1\cdot G_2$ for some $G_i\in GL_{n-1}(\mathscr A)$ having logarithm. Thus for some $\lambda>0$
 \begin{equation}\label{equ4.1}
 \begin{array}{l}
 \displaystyle
 A_2=\left(
 \begin{array}{cc}
 \tilde a_{11}&0_{n-1}\\
 H&\lambda\cdot G_1
 \end{array}
 \right)\cdot 
 \left(\begin{array}{cc}
 1&0_{n-1}\\
 0_{n-1}^{\rm T}&\frac{1}{\lambda}\cdot G_2
 \end{array}
 \right)\cdot 
 \left(\begin{array}{cc}
 1&K\\
 0_{n-1}^{\rm T}&I_{n-1}
 \end{array}
 \right)=\\
 \\
 \displaystyle \left(
 \begin{array}{cc}
 \tilde a_{11}&0_{n-1}\\
 H&\lambda\cdot G_1
 \end{array}
 \right)\cdot \left(\begin{array}{cc}
 1&K\\
 0_{n-1}^{\rm T}&\frac{1}{\lambda}\cdot G_2
 \end{array}
 \right).
 \end{array}
 \end{equation}
Let us choose $\lambda$ so large that the spectra $\sigma_{n-1}(\lambda\cdot G_1)$ and
$\sigma_1(\tilde a_{11})$ of $\lambda\cdot G_1\in GL_{n-1}(\mathscr A)$ and $\tilde a_{11}\in GL_1(\mathscr A)$ do not intersect and the spectrum $\sigma_{n-1}(\frac{1}{\lambda}\cdot G_2)
$ of $\frac{1}{\lambda}\cdot G_2\in GL_{n-1}(\mathscr A)$ does not contain $1$. Then the matrices $\lambda\cdot G_1-\tilde a_{11}\cdot I_{n-1}$ and $I_{n-1}-\frac{1}{\lambda}\cdot G_2$ belong to $GL_{n-1}(\mathscr A)$ and for $X:=(\lambda\cdot G_1-\tilde a_{11}\cdot I_{n-1})^{-1}H\in M_{n-1,1}(\mathscr A)$ and $Y:= K(I_{n-1}-\frac{1}{\lambda}\cdot G_2)^{-1}\in  M_{1,n-1}(\mathscr A)$ we have
\begin{equation}\label{equ4.2}
\begin{array}{l}
\displaystyle
\left(
 \begin{array}{cc}
 1&0_{n-1}\\
X & I_{n-1}
 \end{array}
 \right)\cdot
\left(
 \begin{array}{cc}
 \tilde a_{11}&0_{n-1}\\
 H&\lambda\cdot G_1
 \end{array}
 \right)\cdot \left(
 \begin{array}{cc}
 1&0_{n-1}\\
X & I_{n-1}
 \end{array}
 \right)^{-1}=\left(
 \begin{array}{cc}
 \tilde a_{11}&0_{n-1}\\
0_{n-1}^{\mathrm T} & \lambda\cdot G_1
 \end{array}
 \right),
 \end{array}
 \end{equation}
\begin{equation}\label{equ4.3}
\begin{array}{l}
\displaystyle
\left(
 \begin{array}{cc}
 1&Y\\
0_{n-1}^{\mathrm T} & I_{n-1}
 \end{array}
 \right)\cdot
\left(
 \begin{array}{cc}
 1&K\\
 0_{n-1}^{\mathrm T}&\frac{1}{\lambda}\cdot G_2
 \end{array}
 \right)\cdot \left(
 \begin{array}{cc}
 1&Y\\
0_{n-1}^{\mathrm T}&I_{n-1}
 \end{array}
 \right)^{-1}=\left(
 \begin{array}{cc}
 1&0_{n-1}\\
0_{n-1}^{\mathrm T} &\frac{1}{\lambda}\cdot G_2
\end{array}
 \right).
 \end{array}
 \end{equation}
Clearly, the matrices on the right-hand sides of \eqref{equ4.2} and \eqref{equ4.3} have logarithm. Hence, matrix $A_2$ in \eqref{equ4.1} is a product of two exponents of matrices from $M_n(\mathscr A)$. This completes the proof of the induction step and therefore of the theorem.
\end{proof}


\begin{thebibliography}{}




\bibitem[BRS]{BRS}
A. Brudnyi, L. Rodman and I. M. Spitkovsky, Projective free algebras of continuous functions on compact abelian groups, J. Funct. Anal. {\bf 259} (2010) 918--932.



\bibitem[Br]{Br}
A. Brudnyi, On the factorization of matrices over commutative Banach algebras, Integr. Equ. Oper. Theory {\bf 90} (2018), Paper No. 6, 8 pp.


\bibitem[CS]{CS}
G. Corach and F. Su\'{a}rez, Stable rank in holomorphic function algebras. 
Illinois J. Math. {\bf 29} (1985), 627--639.


\bibitem[DV]{DV}
R. K. Dennis and L. N. Vaserstein, On a question of M. Newman on the number of commutators. J. Algebra {\bf 118} (1988), 150--161.

\bibitem[DK]{DK}
E. Doubtsov and F. Kutzschebauch, Factorization by elementary matrices, null--homotopy and products of exponentials for invertible matrices over rings, Anal. Math. Phys. {\bf 9} (2019), 1005-1018.

 








\bibitem[IK]{IK}
B. Ivarsson and F. Kutzschebauch, 
On the number of factors in the unipotent factorization of holomorphic mappings into $SL_2(\mathbb C)$. Proc. Amer. Math. Soc. {\bf 140}  (2012), no. 3, 823--838.


\bibitem[KS]{KS}
F. Kutzschebauch and L. Studer, Exponential factorizations of holomorphic maps,
Bull. London Math. Soc. {\bf 51} (2019), 995--1004.

\bibitem[L]{L}
J. Leiterer,   On holomorphic matrices on bordered Riemann surfaces, arXiv:2010.02581. 

\bibitem[M]{M}
E. A. Michael,  Locally multiplicatively--convex topological algebras, Mem. Am. Math. Soc., {\bf 11} (1952).

\bibitem[MR1]{MR1}
R. Mortini and R. Rupp, Reducibility of invertible tuples to the principal component in commutative Banach algebras, Ark. Mat. {\bf 54} (2) (2016), 499--524.

\bibitem[MR2]{MR}
R. Mortini and R. Rupp, Logarithms and exponentials in the matrix algebra $\mathcal M_2(A)$, Computer Methods Funct. Theory {\bf 18} (2018), 53--87.

\bibitem[R]{R}
W. Rudin, Functional analysis,  2nd Edition, McGraw--Hill, 1991.






\bibitem[S]{S}
G. E. Shilov, On the decomposition of a commutative normed ring into a direct
sum of ideals,  Amer. Math. Soc. Transl. (2)  {\bf 1} (1955), 37--48.

\bibitem[T]{T}
S. Treil, Stable rank of $H^\infty$ is equal to one, J. Funct. Anal. {\bf 109} (1992), 130--154.


\bibitem[V1]{V1}
L. N. Vaserstein, The stable range for rings and the dimension of topological spaces, Funct. Anal. Appl., vol. 5 (1971), 102--110.

\bibitem[V2]{V2}
L. N. Vaserstein, Reduction of a matrix depending on parameters to a diagonal form by addition operations. Proc. Amer. Math. Soc. {\bf 103} (1988), 741--746.

\bibitem[V3]{V3}
L. N. Vaserstein, Bass's first stable range condition,  J. Pure Appl. Algebra {\bf 34} (1984), 319--330.

\bibitem[XT]{XT}
Xandi Tuni, Logarithm of complex matrices in holomorphic families,
http://mathoverflow.net/questions/77210/logarithm-of-complex-matrices-in-holomorphic-families 

\end{thebibliography}
\end{document}